\documentclass{amsart}

\usepackage{amsmath,amssymb,amsthm,amsfonts,mathrsfs}
\usepackage{amscd,stmaryrd}
\usepackage[usenames,dvipsnames]{color}
\usepackage{hyperref}
\usepackage{graphicx,subfigure}
\definecolor{nicegreen}{RGB}{0,180,0}
\hypersetup{colorlinks=true,citecolor=nicegreen,linkcolor=Red,urlcolor=blue}
\usepackage[divide={2.45cm,*,2.45cm}]{geometry}
\usepackage{enumerate}
\usepackage{color}
\usepackage[all]{xy}
\usepackage{tikz-cd}
\usepackage{wasysym}
\usepackage{mathtools}

\allowdisplaybreaks

\newtheorem{thm}{Theorem}[section]
\newtheorem*{thm*}{Theorem}
\newtheorem{cor}[thm]{Corollary}
\newtheorem{lemma}[thm]{Lemma}

\newtheorem*{propn*}{Proposition}

\theoremstyle{definition}

\newtheorem{rmk}[thm]{Remark}

\newcommand{\soc}{\textnormal{soc}}

\newcommand{\End}{\textnormal{End}}
\newcommand{\Ind}{\textnormal{Ind}}
\newcommand{\cind}{\textnormal{c-ind}}

\newcommand{\der}{{\textnormal{der}}}

\newcommand*{\longhookrightarrow}{\ensuremath{\lhook\joinrel\relbar\joinrel\rightarrow}}
\newcommand*{\longtwoheadrightarrow}{\ensuremath{\relbar\joinrel\twoheadrightarrow}}

\newcommand{\cH}{{\mathcal{H}}}

\newcommand{\cO}{{\mathcal{O}}}

\newcommand{\bB}{{\mathbf{B}}}

\newcommand{\bG}{{\mathbf{G}}}

\newcommand{\bM}{{\mathbf{M}}}
\newcommand{\bN}{{\mathbf{N}}}

\newcommand{\bP}{{\mathbf{P}}}

\newcommand{\bS}{{\mathbf{S}}}
\newcommand{\bT}{{\mathbf{T}}}

\newcommand{\bZ}{{\mathbf{Z}}}

\newcommand{\bbF}{{\mathbb{F}}}

\newcommand{\bbQ}{{\mathbb{Q}}}

\newcommand{\bbZ}{{\mathbb{Z}}}

\newcommand{\nA}{\textnormal{A}}
\newcommand{\nB}{\textnormal{B}}
\newcommand{\nC}{\textnormal{C}}
\newcommand{\nD}{\textnormal{D}}
\newcommand{\nE}{\textnormal{E}}

\newcommand{\nG}{\textnormal{G}}
\newcommand{\nH}{\textnormal{H}}

\newcommand{\nL}{\textnormal{L}}

\newcommand{\nS}{\textnormal{S}}

\begin{document}
\nocite{}

\title{Non-equivalence of pro-$p$-Iwahori invariants}
\date{}
\author{Karol Kozio\l}
\thanks{The author was supported by a PSC-CUNY Trad B award.}
\address{Baruch College, City University of New York, 55 Lexington Ave., New York, NY 10010} 
\email{karol.koziol@baruch.cuny.edu}

\subjclass[2010]{22E50 (primary), 20C08 (secondary)}

\begin{abstract}
  Suppose $F$ is a nonarchimedean local field whose residue field is a proper extension of $\mathbb{F}_p$ with $p > 3$.  Generalizing results of Ghate--Le--Sheth, we show that any split, connected, reductive group $G$ over $F$ which is not a torus admits a smooth, irreducible, non-admissible mod $p$ representation.  We use this to show that the functor of pro-$p$-Iwahori invariants does not induce an equivalence between the category of smooth mod $p$ $G$-representations generated by their pro-$p$-Iwahori invariant vectors and modules over the pro-$p$-Iwahori--Hecke algebra, contrary to what happens for $\textnormal{GL}_2(\mathbb{Q}_p)$.  
\end{abstract}

\maketitle

\section{Introduction}
Suppose $F$ is a nonarchimedean local field of residual characteristic $p$, $G$ the group of $F$-points of a split, connected, reductive group over $F$, and $k$ a coefficient field of characteristic $p$.  In order to analyze the category $\mathfrak{Rep}_k(G)$ of smooth $G$-representations on $k$-vector spaces, it is often advantageous to restrict the action to compact open subgroups of $G$.  In particular, taking $I_1$ to be a choice of pro-$p$-Iwahori subgroup, the full subcategory $\mathfrak{Rep}^{I_1}_k(G)$ of objects generated by their $I_1$-invariant vectors has proven to be particularly useful, owing to the fact that any nonzero $G$-representation $\pi$ over $k$ satisfies $\pi^{I_1} \neq 0$.  Moreover, we obtain a pair of adjoint functors
\begin{eqnarray}
\mathfrak{Rep}_k^{I_1}(G) & \begin{tikzcd}
\arrow[r, shift left] & \arrow[l, shift left]
\end{tikzcd}
 & \mathfrak{Mod-}\cH_k(G,I_1) \label{equiv}\\
\pi & \longmapsto & \pi^{I_1}\notag \\ 
M\otimes_{\cH_k(G,I_1)}\cind_{I_1}^G(k)  & \longmapsfrom & M \notag
\end{eqnarray}
where $\cH_k(G,I_1) := k[I_1\backslash G/I_1]$ denotes the pro-$p$-Iwahori--Hecke algebra.  This viewpoint has found important applications to the mod $p$ Local Langlands Program (for example, see \cite[Thm. 0.7, Rmk. 0.8]{colmez:res-phi-gamma-mods} and \cite{grosseklonne:phi-gamma}).

When $G = \nG\nL_2(\bbQ_p)$ or $\nS\nL_2(\bbQ_p)$, the adjoint functors \eqref{equiv} induce an equivalence of categories (\cite{ollivier:foncteur}, \cite{koziol:glnsln}, \cite{ollivierschneider:torsion}).  Furthermore, it is known that we also obtain an equivalence for a general $G$ if we restrict to ``principal series'' subcategories on both sides (see \cite{ollivierschneider:torsion}, \cite{abe:comparison} for precise statements).  Despite these positive results, \cite[\S 1, Thm.]{ollivier:foncteur} shows that we cannot expect an equivalence in general, at least if $G = \nG\nL_2(F)$.

The purpose of this note is to prove that for ``most'' split $p$-adic reductive groups, the functors \eqref{equiv} fail to give an equivalence.  More precisely, we suppose $p > 3$ and that the residue field of $F$ is a proper extension of $\bbF_p$.  We build on the results of \cite{GLS} to show groups $G$ as above admit smooth, irreducible, non-admissible representations defined over $k$ (Corollary \ref{maincor}).  The existence of such a representation implies that \eqref{equiv} cannot induce an equivalence (Theorem \ref{thm:hecke-alg-cor}).

\section{Notation and recollections}
\label{sec:recoll}

 We fix throughout this note a nonarchimedean local field $F$ of residual characteristic $p$ and residue field $k_F$ of degree $f$ over $\bbF_p$.  Our standing assumption is that $p > 3$ and $f > 1$.  We also let $\cO_F$ denote the ring of integers of $F$, and $\varpi \in \cO_F$ a choice of uniformizer.  Note that we allow the characteristic of $F$ to be either $0$ or $p$.

We first recall several results about non-admissible representations of $\nG\nL_2(F)$ from \cite{GLS}.  Let $\boldsymbol{r} = (r_0, r_1, \ldots, r_{f - 1}) \in \bbZ_{\geq 0}^f$ denote an $f$-tuple satisfying $1 \leq r_j \leq p - 3$ for all $j$.  We also fix a tuple in $\prod_{i \in \bbZ} k_F^\times$ satisfying the conditions of \cite[Prop. 3.1]{GLS:erratum}, and let $\rho$ denote the representation of $\nG\nL_2(F)$ over $\overline{k_F}$ constructed in \cite[\S 4, pf. of Thm. 1.1]{GLS} (see also \cite[Pf. of Thm. 1.1]{GLS:erratum}).  The representation $\rho$ satisfies the following properties:
\begin{enumerate}[$\diamond$]
  \item $\rho$ is smooth, irreducible, and non-admissible.
  \item $\rho$ possesses a central character, given by 
  $$\begin{pmatrix}a\varpi^v & 0 \\ 0 & a\varpi^v\end{pmatrix} \longmapsto \overline{a}^{\sum_{j = 0}^{f - 1}r_jp^j}$$
  ($a \in \cO_F^\times, v \in \bbZ$).
  \item $\rho$ has a model $\rho_0$ over $k_F$, which is smooth, absolutely irreducible, and non-admissible.  In particular, $\End_{k_F[\nG\nL_2(F)]}(\rho_0) = k_F$.
\end{enumerate}

  Suppose now that $\bG$ is a split connected reductive group over $F$, and fix a maximal $F$-split torus $\bT$ and Borel subgroup $\bB$ containing $\bT$.  We assume throughout that $\bG \neq \bT$.  For a simple root $\alpha \in \Delta \subset \Phi(\bG,\bT)$ (relative to $\bB$), we let $\bM_\alpha$ denote the closed subgroup of $\bG$ generated by $\bT$ and the root subgroups corresponding to $\pm \alpha$, let $\bP_\alpha = \bM_\alpha \bN_\alpha$ denote the closed subgroup generated by $\bM_\alpha$ and $\bB$, and $\bP_\alpha^- = \bM_\alpha \bN_\alpha^-$ its opposite parabolic (where $\bN_\alpha$ and $\bN_\alpha^-$ denote the unipotent radicals of $\bP_\alpha$ and $\bP_\alpha^-$, respectively).  We denote by italic letters the groups of $F$-points of algebraic groups (so that $G = \bG(F), T = \bT(F)$, etc.).  Moreover, since the group $\bG$ is split over $F$, it and all the subgroups above have smooth integral models over $\cO_F$, which by abuse of notation we denote by the same letters.

\section{Non-admissible representations}

  Our goal is to construct a smooth, irreducible, non-admissible representation of $G$ defined over $\overline{k_F}$.  We begin with the following.

  \begin{lemma}
    \label{lem1}
    Suppose $\bG$ is almost simple and simply connected.  Then $G$ possesses a smooth, irreducible, non-admissible representation $\pi$ defined over $\overline{k_F}$.  Moreover, the representation $\pi$ can be chosen so that the (finite) center of $G$ acts trivially.
  \end{lemma}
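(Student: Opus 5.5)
The plan is to build $\pi$ by parabolic induction from the Levi subgroup $M_\alpha$ for a chosen simple root $\alpha\in\Delta$. The inducing representation will come from the $\nG\nL_2(F)$-representation $\rho$ of \cite{GLS} recalled in Section \ref{sec:recoll}.

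\textbf{Step 1: structure of $M_\alpha$.} Since $\bG$ is simply connected, $X_*(\bT)$ is the coroot lattice, and the simple coroots form a basis of it. Hence $\bT\cong\prod_{\beta\in\Delta}\beta^\vee(\mathbb{G}_m)$. The subgroup generated by the root subgroups $U_{\pm\alpha}$ is $\nS\nL_2$, and its intersection with $\prod_{\beta\neq\alpha}\beta^\vee(\mathbb{G}_m)$ is trivial, because the coroots are a basis. So I expect
$$M_\alpha\cong \nS\nL_2(F)\times (F^\times)^{|\Delta|-1}.$$

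\textbf{Step 2: a non-admissible irreducible representation of $\nS\nL_2(F)$.} Restrict $\rho$ to $Z\cdot\nS\nL_2(F)$, where $Z$ is the center of $\nG\nL_2(F)$. This subgroup has finite index, since $\nG\nL_2(F)/Z\nS\nL_2(F)\cong F^\times/(F^\times)^2$ and $p$ is odd. By Clifford theory, the restriction of the irreducible $\rho$ is a finite direct sum of $\nG\nL_2(F)$-conjugate irreducible representations. If every summand were admissible, $\rho$ would be admissible. Hence some summand $\sigma_0$ is smooth, irreducible and non-admissible, and it remains irreducible on $\nS\nL_2(F)$ because $Z$ acts by scalars. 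Let $\sigma=\sigma_0\boxtimes\chi$ be the representation of $M_\alpha$ given by $\sigma_0$ on the $\nS\nL_2(F)$ factor and a character $\chi$ on the torus factor. Here $\chi$ is to be chosen later: trivial if possible, otherwise generic.

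\textbf{Step 3: induction.} Set $\pi=\Ind_{P_\alpha^-}^G\sigma$. For non-admissibility, note that the functions supported in the open cell $P_\alpha^-N_\alpha$ give a $P_\alpha$-stable subspace containing $\cind$-type functions. From these one gets an injection $\sigma^{K\cap M_\alpha}\hookrightarrow\pi^{K}$ for small compact open $K$ with Iwahori decomposition, so the infinite-dimensionality of the invariants of $\sigma$ transfers to $\pi$.

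\textbf{Step 4: the finite center.} $Z(G)\subset T\subset M_\alpha$, so I need $\sigma|_{Z(G)}$ to be trivial. The $\nS\nL_2$-component of $Z(G)$ lies in $\{\pm1\}$. By the central-character formula for $\rho$, $-1$ acts by $(-1)^{\sum_j r_jp^j}=(-1)^{\sum_j r_j}$, so I can choose the $r_j$ (all in $[1,p-3]$) with even sum. The remaining torus components of $Z(G)$ are killed by taking $\chi$ trivial on the finite group they generate.

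\textbf{Main obstacle: irreducibility of $\pi$.} I expect Step 3's irreducibility claim to be the hardest part. The available irreducibility criteria for $\Ind_P^G\sigma$ (Abe--Henniart--Herzig--Vign\'eras) are phrased for irreducible, or admissible, $\sigma$. The criterion says the induction is irreducible when $\sigma$ does not extend to the $M$ of a larger parabolic, and otherwise reduces to a generalized Steinberg analysis.

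My first attempt is to check that the proof of that criterion only uses irreducibility of $\sigma$ together with the existence of a central character, with no admissibility needed. I would also arrange that $\sigma$ does not extend to any larger standard Levi. For this, take $\chi$ nontrivial on each $\beta^\vee(F^\times)$ with $\beta$ adjacent to $\alpha$, subject to the center condition; alternatively use that $\sigma_0$ is non-admissible, hence not trivial on $\nS\nL_2$. Then $\pi$ should be irreducible, and the finite center acts trivially by Step 4.

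If this route fails, the fallback is to take $\chi$ trivial. One then analyzes $\Ind_{P_\alpha^-}^G(\sigma_0\boxtimes 1)$ via the Jacquet-functor and ordinary-parts adjunction, and shows directly that every nonzero subrepresentation contains the open-cell part.
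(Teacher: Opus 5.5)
You correctly reduce rank one to a non-admissible $\nS\nL_2(F)$-constituent, and your open-cell argument for non-admissibility is fine. There are two real problems. First, your Step 1 is false, so $\sigma=\sigma_0\boxtimes\chi$ is not well defined. Trivial intersection of $\langle U_{\pm\alpha}\rangle\cong\nS\nL_2$ with $\prod_{\beta\neq\alpha}\beta^\vee(\mathbb{G}_m)$ gives only a semidirect product. For a neighbour $\beta$ of $\alpha$, $\beta^\vee(t)$ acts on $U_{\pm\alpha}$ through $t^{\pm\langle\alpha,\beta^\vee\rangle}\neq 1$, i.e.\ on $\nS\nL_2(F)$ as conjugation by $\mathrm{diag}(t^{\langle\alpha,\beta^\vee\rangle},1)$. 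For instance, in $\nS\nL_3$ one has $M_{\alpha_1}\cong\nG\nL_2(F)$, not $\nS\nL_2(F)\times F^\times$. Conjugation with non-square entries can move $\sigma_0$ to a different Clifford constituent of $\rho|_{\nS\nL_2(F)}$, so $\sigma_0$ need not extend to $M_\alpha$ at all. Your Step 4 also collapses: in $\nS\nL_3(F)$, for $1\neq\zeta\in\mu_3(F)$, the central element $\zeta I=\alpha_1^\vee(\zeta)\alpha_2^\vee(\zeta^2)$ has ``$\nS\nL_2$-component'' $\alpha_1^\vee(\zeta)\notin\{\pm1\}$. The paper avoids this by taking $\alpha$ short and extremal, proving $\bM_\alpha\cong\nG\nL_2\times\bT'$ via an explicit change of basis of $X^*(\bT)$, and inducing a twist of $\rho$ itself.

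The more serious gap is irreducibility, which you leave open, and neither suggested route works as stated. For the criterion ``$\Ind_P^G\sigma$ is irreducible if $\sigma$ extends to no larger Levi'', the relevant roots are the simple roots $\gamma$ orthogonal to $\alpha$. A neighbour $\beta$ never allows an extension once $\sigma$ is nontrivial on $\langle U_{\pm\alpha}\rangle$. So making $\chi$ nontrivial on the adjacent $\beta^\vee(F^\times)$, or invoking $\sigma_0\neq 1$, rules out nothing. Moreover, validity of the criterion for non-admissible $\sigma$ is exactly what would have to be proved. The fallback $\chi=1$ is wrong whenever such a $\gamma$ exists, e.g.\ $\nS\nL_4$ with $\alpha=\alpha_1$, $\gamma=\alpha_3$. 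Then $\sigma$ is trivial on $\gamma^\vee(F^\times)$ and extends to $M_{\{\alpha,\gamma\}}$ with the $\gamma$-copy of $\nS\nL_2(F)$ acting trivially. Hence $\Ind_{P_\alpha^-}^G\sigma$ contains the induction of that extension from $P^-_{\{\alpha,\gamma\}}$ as a proper subrepresentation.

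What is missing is the paper's weight-regularity argument:
\begin{enumerate}[(i)]
\item Replace $\rho$ by $\rho'=\rho\otimes(\omega^{-m}\circ\det)$, with $m$ avoiding the determinant exponents of the at most $4f-1<p^f-1$ weights in $\soc_{\nG\nL_2(\cO_F)}(\rho)$.
\item Take $\chi'$ on $\bT'$ given by $\xi$ with $0<\langle\xi,e_i^*\rangle<p^f-1$.
\item Any weight $F(\nu)\hookrightarrow\pi|_{\bG(\cO_F)}$ gives $F(\nu)^{N_\alpha}\hookrightarrow\rho'\boxtimes\chi'$. This forces $\langle\nu,\gamma^\vee\rangle\neq0$ for every simple $\gamma\neq\alpha$: the twist handles $\beta$, and $\chi'$ handles the $\gamma_i$.
\item So every weight of $\pi$ is $M_\alpha$-regular in Herzig's sense. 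The argument of \cite[Lem.~4.1]{GLS} then gives irreducibility from irreducibility of $\rho'\boxtimes\chi'$ alone, with no admissibility needed.
\end{enumerate}
These choices must also be made compatible with trivial action of the center, which the paper arranges by adjusting $\boldsymbol{r}$, $m$ and $\xi$.
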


  \begin{proof}
    We proceed in several steps.

    \vspace{5pt}

    \textit{Step 1.}  We prove the claim when $\bG$ is of rank 1 (that is, type $\nA_1$), so that  $G \cong \nS\nL_2(F)$.  Let $Z_{\nG\nL_2} \cong F^\times$ denote the center of $\nG\nL_2(F)$.  Since $p > 3$, $Z_{\nG\nL_2}\nS\nL_2(F)$ is a normal subgroup of $\nG\nL_2(F)$ of index 4.  Let $\rho$ denote an irreducible non-admissible representation of $\nG\nL_2(F)$ as in Section \ref{sec:recoll}.  Then the restriction $\rho|_{Z_{\nG\nL_2}\nS\nL_2(F)}$ splits as the direct sum of at most 4 irreducible $Z_{\nG\nL_2}\nS\nL_2(F)$-representations (see, for example, \cite[Lem. 2.4]{labesselanglands} or \cite{MO-clifford}, the proofs of which work over general coefficient fields).  Since $\rho$ possesses a central character, the irreducible summands of $\rho|_{Z_{\nG\nL_2}\nS\nL_2(F)}$ will remain irreducible upon further restriction to $\nS\nL_2(F)$.  Next, if we let $I_1$ denote a pro-$p$ Iwahori subgroup of $\nG\nL_2(F)$, then the assumption $p > 3$ implies $I_1 = (Z_{\nG\nL_2} \cap I_1)(\nS\nL_2(F)\cap I_1)$, which gives $\rho^{I_1} = \rho^{\nS\nL_2(F) \cap I_1}$.  Thus, at least one of the irreducible summands of $\rho|_{\nS\nL_2(F)}$ must be non-admissible, and we denote this summand by $\pi$.  Finally, if $\boldsymbol{r}$ is chosen such that all $r_j$ are even, then the center of $\nS\nL_2(F)$ will act trivially on $\rho$, and hence also on $\pi$.  

    \vspace{5pt}

    \textit{Step 2.}  We suppose in the remainder of the proof that $\bG$ has rank $n \geq 2$, and show that we can select $\alpha \in \Delta$ for which $\bM_\alpha \cong \nG\nL_2 \times \bT'$ for some split torus $\bT'$.  Let $\alpha$ denote a short simple root which is extremal in the Dynkin diagram of $\bG$, let $\beta$ denote its unique neighbor, and enumerate the remaining roots as $\Delta \smallsetminus \{\alpha, \beta\} = \{\gamma_3, \ldots, \gamma_n\}$.  Since $\alpha$ is short and extremal, we have $\langle \alpha,\beta^\vee\rangle = -1$ and $\langle \alpha,  \gamma_i^\vee\rangle = 0$ for $i = 3, \ldots, n$.  Furthermore, since $\bG$ is simply connected, we have $X_*(\bT) = \bigoplus_{\gamma \in \Delta} \bbZ \gamma^\vee$ and $X^*(\bT) = \bigoplus_{\gamma \in \Delta} \bbZ \varpi_\gamma$, where $\varpi_\gamma$ denotes the fundamental dominant weight associated to $\gamma$ (i.e., $\langle \varpi_\gamma, \eta^\vee\rangle = \delta_{\gamma, \eta}$ for all $\gamma, \eta \in \Delta$).  Note that by our choice of $\alpha$, we have $\alpha = 2\varpi_\alpha - \varpi_\beta$.

    We claim that $\bM_\alpha \cong \nG\nL_2 \times \bT'$, where $\bT'$ denotes the subtorus of $\bT$ generated by the images of $\gamma_3^\vee, \ldots, \gamma_n^\vee$.  Indeed, it suffices to construct an isomorphism $\varphi:X^*(\bT) \stackrel{\sim}{\longrightarrow} \bbZ^{\oplus n}$ taking $\alpha$ to $(1,-1,0, \ldots, 0)$, and whose transpose $\varphi^\top:\bbZ^{\oplus n} \stackrel{\sim}{\longrightarrow} X_*(\bT)$ sends $(1,-1,0,\ldots,0)$ to $\alpha^\vee$.  We define $\varphi$ by
    $$\varphi(\varpi_\alpha) = e_1, \qquad \varphi(\varpi_\beta) = e_1 + e_2, \qquad \varphi(\varpi_{\gamma_i}) = e_i~\textnormal{for $i = 3, \ldots, n$}.$$
    One then verifies that 
    $$\varphi^\top(e_1^*) = \alpha^\vee + \beta^\vee, \qquad \varphi^\top(e_2^*) = \beta^\vee, \qquad \varphi^\top(e_i^*) = \gamma_i^\vee ~\textnormal{for $i = 3, \ldots, n$},$$
  and 
  $$\varphi(\alpha) = 2\varphi(\varpi_\alpha) - \varphi(\varpi_\beta) = e_1 - e_2,\qquad \varphi^\top(e_1^* - e_2^*) = \varphi^\top(e_1^*) - \varphi^\top(e_2^*) = \alpha^\vee,$$
  as desired.

    \vspace{5pt}

    \textit{Step 3.}  We construct our candidate representation $\pi$.  Let us choose $\xi \in X^*(\bT')$ satisfying $0 < \langle \xi, e_i^*\rangle < p^f - 1$ for $i = 3, \ldots, n$ (e.g., $\xi = \sum_{i = 3}^n e_i$), and let $\chi':T' \longrightarrow k_F^\times \longhookrightarrow \overline{k_F}^\times$ denote any smooth character for which $\chi'|_{\bT'(\cO_F)}$ is equal to the inflation to $\bT'(\cO_F)$ of $\xi|_{\bT'(k_F)}$.  We also choose a smooth, non-admissible representation $\rho$ of $\nG\nL_2(F)$ as in Section \ref{sec:recoll}, and select an integer $0 \leq m < p^f - 1$ such that $m$ is distinct modulo $p^f - 1$ from all determinant powers appearing in $\soc_{\nG\nL_2(\cO_F)}(\rho)$ (as in \cite[pf. of Lem. 4.5]{GLS}, such a choice is possible since the number of distinct weights in $\soc_{\nG\nL_2(\cO_F)}(\rho)$ is at most $4f - 1 < p^f - 1$).  We set $\rho' := \rho \otimes (\omega^{-m}\circ \det_{\nG\nL_2})$, where $\omega:F^\times \longrightarrow k_F^\times \longhookrightarrow \overline{k_F}^\times$ is the character $\omega(a\varpi^v) = \overline{a}$ for $a \in \cO_F^\times, v \in \bbZ$.  Via the isomorphism $\varphi:  \nG\nL_2 \times \bT' \stackrel{\sim}{\longrightarrow} \bM_\alpha$, we view $\rho' \boxtimes \chi'$ as a smooth representation of $M_\alpha$, which we then inflate to $P_\alpha^-$, and denote by $\pi := \Ind_{P_\alpha^-}^G(\rho'\boxtimes \chi')$ the parabolically induced representation.  As in \cite[pf. of Thm. 1.1]{GLS}, the representation $\pi$ is smooth and non-admissible.

    \vspace{5pt}

    \textit{Step 4.} We prove that $\pi$ is irreducible.  To this end, we claim that every $\bG(\cO_F)$-weight of $\pi$ is $M_\alpha$-regular in the sense of \cite[Def. 2.4]{herzig:gln}.  By \cite[Prop. 2.2]{herzig:gln}, any $\bG(\cO_F)$-weight of $\pi$ is of the form $F(\nu)$ for $\nu \in X^*(\bT)$ satisfying $0 \leq \langle \nu, \gamma^\vee\rangle \leq p^f - 1$ for all $\gamma \in \Delta$.  As in the proof of \cite[Lem. 4.1]{GLS} (and using \cite[Lem. 2.3]{herzig:gln}), the inclusion $F(\nu) \longhookrightarrow \pi|_{\bG(\cO_F)}$ gives rise to an inclusion $F^{M_\alpha}(\nu) \cong F(\nu)^{N_\alpha} \longhookrightarrow (\rho'\boxtimes \chi')|_{\bM_\alpha(\cO_F)}$.  Applying the isomorphism $\varphi$ gives 
    $$F^{\nG\nL_2(F) \times T'}(\varphi(\nu)) \cong F^{\nG\nL_2(F)}(\textnormal{pr}_{\nG\nL_2}(\nu))\boxtimes (\textnormal{pr}_{\bT'}(\nu)|_{\bT'(k_F)}) \longhookrightarrow (\rho'|_{\nG\nL_2(\cO_F)})\boxtimes (\chi'|_{\bT'(\cO_F)}),$$ 
    where $\textnormal{pr}_{\nG\nL_2}$ (resp., $\textnormal{pr}_{\bT'}$) denotes the composition of $\varphi:X^*(\bT) \stackrel{\sim}{\longrightarrow} \bbZ^{\oplus n}$ with the projection onto the first $2$ components (resp., last $n - 2$ components).  Hence, we see that $F^{\nG\nL_2(F)}(\textnormal{pr}_{\nG\nL_2}(\nu))$ is a $\nG\nL_2(\cO_F)$-weight of $\rho'$, and $\textnormal{pr}_{\bT'}(\nu)|_{\bT'(k_F)} \cong \chi'|_{\bT'(\cO_F)} \cong \xi|_{\bT'(k_F)}$.  Note that for $i = 3, \ldots, n$, we have 
    $$\langle \nu, \gamma_i^\vee\rangle = \langle \nu, \varphi^\top(e_i^*) \rangle = \langle \varphi(\nu), e_i^* \rangle = \langle \textnormal{pr}_{\bT'}(\nu), e_i^* \rangle = \langle \xi, e_i^* \rangle.$$
  Thus, we obtain $0 < \langle \nu, \gamma_i^\vee\rangle < p^f - 1$ for $i = 3, \ldots, n$.  Similarly, we have
      $$\langle \nu, \beta^\vee\rangle = \langle \nu, \varphi^\top(e_2^*) \rangle = \langle \varphi(\nu), e_2^* \rangle = \langle \textnormal{pr}_{\nG\nL_2}(\nu), e_2^* \rangle.$$
      The latter quantity is equal to the determinant power appearing in the weight $F^{\nG\nL_2(F)}(\textnormal{pr}_{\nG\nL_2}(\nu))$, which by our choice of $m$ is never equal to 0.  This implies $0 < \langle \nu,\beta^\vee\rangle \leq p^f - 1$, and we obtain that $F(\nu)$ is $M_\alpha$-regular by the discussion preceding \cite[Lem. 2.5]{herzig:gln}.  Using the argument of \cite[Lem. 4.1]{GLS}, we conclude that $\pi$ is irreducible.

    \vspace{5pt}

    \textit{Step 5.} Finally, we check that $\pi$ may be chosen to have trivial central character.  Since the center of $G$ is finite and contained in $T$, any central element is of the form $\lambda(\zeta)$, where $\lambda \in X_*(\bT)$ and $\zeta \in F$ is a root of unity.  In the cases below, we use the notation $\textnormal{pr}_{\nG\nL_2}$ (resp., $\textnormal{pr}_{\bT'}$) to denote the composition of $\varphi^{\top, -1}:X_*(\bT) \stackrel{\sim}{\longrightarrow} \bbZ^{\oplus n}$ with the projection onto the first $2$ components (resp., the last $n - 2$ components).   
      \begin{enumerate}[$\diamond$]
        \item Suppose first that $\bG$ is not of type $\nA_n$.  Then the center $Z$ of $G$ has order at most $4$ (see \cite[Ch. VI, Planches II -- IX]{bourbaki:lie}), and any element of $Z$ is of the form $\lambda(\zeta)$ where the order of $\zeta$ divides $|Z|$.  This element acts on $\pi$ via the scalar 
        $$\omega_{\rho'}\left(\textnormal{pr}_{\nG\nL_2}(\lambda)(\zeta)\right)\chi'\left(\textnormal{pr}_{\bT'}(\lambda)(\zeta)\right),$$
        where $\omega_{\rho'}$ denotes the central character of $\rho'$.  The second factor equals $\overline{\zeta}^{\langle \xi, \textnormal{pr}_{\bT'}(\lambda)\rangle}$; choosing $\xi$ such that $\langle \xi, e_i^*\rangle = |Z|$ for $i = 3, \ldots, n$ gives $\chi'\left(\textnormal{pr}_{\bT'}(\lambda)(\zeta)\right) = 1$.  The first factor is equal to a power of $\overline{\zeta}^{~-2m + \sum_{j = 0}^{f - 1}r_jp^j}$; proceeding in a case-by-case manner shows that we may always choose $\boldsymbol{r} = (r_0, \ldots, r_{f - 1})$ and $m$ to make this quantity equal to 1.  (For example, if $\bG$ is of type $\nB_n, \nC_n, \nD_n$ with $n$ even, or $\nE_7$, every element of $Z$ has order 2, and it suffices to choose all $r_j$ to be even.)
        \item Suppose $\bG$ is of type $\nA_n$, so that $G \cong \nS\nL_{n + 1}(F)$.  The center $Z$ of $G$ is then generated by $\lambda(\zeta)$, where $\zeta$ is a generator of $\mu_{n + 1}(F)$ and $\lambda = \sum_{i = 1}^n i\alpha_i^\vee$ (using the numbering in \cite[Ch. VI, Planche I]{bourbaki:lie}, where we take the $\alpha$ above to be equal to $\alpha_1$).  This element acts on $\pi$ via the scalar
        $$\omega_{\rho'}\left(\textnormal{pr}_{\nG\nL_2}(\lambda)(\zeta)\right)\chi'\left(\textnormal{pr}_{\bT'}(\lambda)(\zeta)\right) = \overline{\zeta}^{~ -2m + \sum_{j = 0}^{f - 1}r_jp^j + \sum_{i = 3}^n i\langle\xi, e_i^* \rangle}.$$
        We leave it as an exercise to the reader to verify that we may always choose $\boldsymbol{r} = (r_0, \ldots, r_{f - 1})$, $m$, and $\langle \xi, e_i^*\rangle$ ($i = 3, \ldots, n$) such that $-2m + \sum_{j = 0}^{f - 1}r_jp^j + \sum_{i = 3}^n i\langle\xi, e_i^* \rangle \equiv 0~(\textnormal{mod}~ n + 1)$, which finishes the claim.
        \end{enumerate}
\end{proof}

\begin{lemma}
  \label{lem2}
  Suppose $\bG$ has simply connected derived subgroup.  Then $G$ possesses a smooth, irreducible, non-admissible representation $\pi$ defined over $\overline{k_F}$.  Moreover, the representation $\pi$ can be chosen so that the $F$-points of the connected center of $\bG$ act trivially.
\end{lemma}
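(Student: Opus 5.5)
The plan is to reduce to Lemma \ref{lem1} through the structure of $\bG$ as a quotient of a product of its derived group and its connected center.

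Let $\bG^\der$ be the derived subgroup and $\bZ^\circ$ the connected center. By hypothesis $\bG^\der$ is simply connected, and since $\bG$ is split it decomposes as a product $\bG^\der = \prod_{i=1}^s \bG_i$ of almost simple, simply connected, split groups. Because $\bG \neq \bT$, we have $s \geq 1$. The multiplication map $\mu: \bZ^\circ \times \bG^\der \to \bG$ is a central isogeny with kernel $\bZ^\circ \cap \bG^\der$. This kernel is a finite group of multiplicative type contained in $Z(\bG^\der) = \prod_i Z(\bG_i)$.

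\textbf{Step 1: build a representation of the cover.} Apply Lemma \ref{lem1} to $\bG_1$ to get a smooth, irreducible, non-admissible representation $\pi_1$ of $G_1$ on which $Z(G_1)$ acts trivially. For $i \geq 2$, take $\pi_i$ to be the trivial character of $G_i$. Then
$$\sigma := \pi_1 \boxtimes \mathbf{1} \boxtimes \cdots \boxtimes \mathbf{1}$$
is a representation of $Z^\circ \times G^\der$. Irreducibility of $\sigma$ is clear, since all other factors are characters, and $\sigma$ is smooth and non-admissible. Its restriction to the finite group $Z(G^\der)$ is trivial.

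\textbf{Step 2: descend along $\mu$.} Since $\bZ^\circ \times \bG^\der \to \bG$ is surjective with finite central kernel, the image $H := \mu(Z^\circ \times G^\der)$ is an open normal subgroup of $G$. The quotient $G/H$ is finite abelian, because it injects into the Galois cohomology $H^1(F, \ker\mu)$. Since $\sigma$ is trivial on $\ker\mu(F) \subseteq Z(G^\der)$, it factors through a representation $\sigma_H$ of $H$, which is irreducible, smooth, and non-admissible. Here non-admissibility is tested on a compact open subgroup of $G^\der$, which maps injectively modulo a finite kernel.

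We must then extend from $H$ to $G$. The cleanest route is to take $\pi$ to be an irreducible quotient, or subrepresentation, of $\Ind_H^G \sigma_H$. By the finite-index Clifford theory already used in Step 1 of Lemma \ref{lem1}, $\pi|_H$ is a finite direct sum of $G$-conjugates of $\sigma_H$. Conjugation by $G$ acts on $G^\der$ through inner-type automorphisms, coming from $T$, which preserve non-admissibility. Hence $\pi$ is smooth, irreducible, and non-admissible, since $\pi^{K}$ for small compact open $K\subseteq H$ contains a copy of $\sigma_H^K$.

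Finally, $Z^\circ$ is central in $G$ and acts trivially on $\sigma_H$, so it acts trivially on $\Ind_H^G\sigma_H$ and hence on $\pi$.

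\textbf{Main obstacle.} The delicate point is the Clifford-theoretic step: guaranteeing that $\Ind_H^G\sigma_H$ has an irreducible quotient and that it remains non-admissible. Since $[G:H]<\infty$, $\Ind_H^G\sigma_H$ is finitely generated, so an irreducible quotient exists by Zorn's lemma. Its restriction to $H$ is a nonzero quotient of $\bigoplus_{g\in G/H} \sigma_H^g$, hence contains some irreducible $\sigma_H^g$. That summand is non-admissible because $K\cap gKg^{-1}$ invariants are still infinite-dimensional.

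An alternative, if it is cleaner, is to replace $\bG$ by a $z$-extension-free argument. One can note that $G/Z^\circ G^\der$ is finite abelian and invoke \cite[Lem. 2.4]{labesselanglands} directly, exactly as in Step 1 of Lemma \ref{lem1}.
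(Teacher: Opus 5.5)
\textbf{The gap.} Your Step 2, and equally your proposed alternative, rests on the claim that $H=\mu(Z^\circ\times G^\der)$ is open of finite index in $G$, because $G/H$ injects into $\nH^1(F,\ker\mu)$. This fails in general, because the paper allows $\textnormal{char}(F)=p$. In that case $\ker\mu=\bZ^\circ\cap\bG^\der$ need not be smooth. If $\bG^\der$ has a factor of type $\nA_n$ with $p\mid n+1$, this kernel can contain the group scheme of $p$-th roots of unity. The relevant cohomology is then flat cohomology, and $\nH^1_{\textnormal{fppf}}(F,\mu_p)\cong F^\times/(F^\times)^p$ is infinite.

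Concretely, take $\bG=\nG\nL_p$ over $F=k_F((t))$, which has simply connected derived subgroup. The determinant identifies $G/Z^\circ G^\der$ with $F^\times/(F^\times)^p$. This quotient is infinite, and $(F^\times)^p$ is not even open in $F^\times$. So $H$ is a non-open closed subgroup of infinite index. The finite-index Clifford theory you rely on is then unavailable: the restriction of $\Ind_H^G\sigma_H$ to $H$ is no longer $\bigoplus_{g\in G/H}\sigma_H^g$, and there is no guaranteed non-admissible constituent.

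Your argument is correct in characteristic $0$, and whenever the order of $\bZ^\circ\cap\bG^\der$ is prime to $p$. For example, when all factors are of type $\nA_1$ the kernel is a $2$-group and $p>3$. This is exactly the paper's Step 1 and the shortcut described in the Remark after the lemma.

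\textbf{What is missing.} You need a way to enlarge $Z^\circ G^\der$ to an open subgroup of finite index to which $\pi'$ still extends. This cannot be done for an arbitrary $\pi_{G_1}$ with trivial central character; the paper uses the explicit shape of the representation from Lemma \ref{lem1}.
\begin{enumerate}
\item Choose a factor $\bG_1$ not of type $\nA_1$, a short extremal simple root $\alpha$ of it, and its neighbour $\beta$. Set $\bT_\alpha=\ker(\alpha)^\circ$.
\item Show that $X_*(\bT_\alpha)\to X_*(\bG/\bG^\der)$ is surjective, via $\lambda\mapsto\lambda+\langle\alpha,\lambda\rangle\beta^\vee$. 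Hence the pro-$p$ part $T_{\alpha,1}$ surjects onto the pro-$p$ part of $(\bG/\bG^\der)(F)$, and $T_{\alpha,1}Z^\circ G^\der$ has finite index in $G$.
\item Since $T_\alpha$ centralizes $M_\alpha$ and $\pi_{G_1}=\Ind_{P_\alpha^-\cap G_1}^{G_1}(\rho'\boxtimes\chi')$, let $t\in T_{\alpha,1}$ act by $f\mapsto f(t^{-1}(\cdot)t)$.
\item Check that this action is compatible with $G^\der$, smooth (via congruence subgroups), and non-admissible.
\item Check that it agrees with the trivial $Z^\circ$-action on $Z^\circ\cap T_{\alpha,1}G^\der=(Z^\circ\cap G^\der)Z^\circ_1$.
\end{enumerate}
Only after this does one induce to $G$ and extract a non-admissible Jordan--H\"older factor, as you do.
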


\begin{proof}
  Let $\bG^\der$ denote the derived subgroup of $\bG$ and let $\bZ^\circ$ denote the connected center of $\bG$.  By the simply connected assumption on $\bG^\der$, we have a factorization
  \begin{equation}
    \label{Gderfactor}
    \bG^\der \cong \bG_1 \times \bG_2 \times \cdots \times \bG_n,
  \end{equation}
  where each $\bG_i$ is almost simple and simply connected.  We again proceed in several steps.

  \vspace{5pt}

  \textit{Step 1.}  Suppose first that every factor of $\bG^\der$ is of type $\nA_1$.  In this case, we consider the short exact sequence
  \begin{equation}
    \label{center-derived}
    1 \longrightarrow \bZ^\circ \cap \bG^\der \longrightarrow \bZ^\circ \times \bG^\der \stackrel{m}{\longrightarrow} \bG \longrightarrow 1,
  \end{equation}
  where $\bZ^\circ \cap \bG^\der$ is a finite, central subgroup embedded anti-diagonally in $\bZ^\circ \times \bG^\der$, and $m$ denotes the multiplication map.

  Define a representation $\pi'$ of $Z^\circ \times G^\der$ by
  $$\pi' := 1_{Z^\circ} \boxtimes \pi_{G_1} \boxtimes 1_{G_2} \boxtimes \cdots \boxtimes 1_{G_n},$$
  where $1_H$ denotes the trivial representation of $H$ over $\overline{k_F}$, and where $\pi_{G_1}$ denotes a smooth, irreducible, non-admissible representation of $G_1 \cong \nS\nL_2(F)$ with trivial action of the finite group $Z_{G_1}$ constructed in Lemma \ref{lem1}.  The representation $\pi'$ is smooth, irreducible, non-admissible, and has a trivial action of $Z^\circ$ by definition.  Note also that $\bZ^\circ \cap \bG^\der$ is contained in the center $\bZ_{\bG^\der} \cong \bZ_{\bG_1} \times \cdots \times \bZ_{\bG_n}$ (whose order is a power of 2).  Therefore, by our choice of $\pi_{G_1}$ we see that the group $Z^\circ \cap G^\der$ acts trivially on $\pi'$, so that $\pi'$ descends to a representation of $Z^\circ G^\der$.

  We note that our assumption $p > 3$ implies $\bZ^\circ \cap \bG^\der$ is a smooth group scheme over $F$.  Taking $F$-rational points of \eqref{center-derived} gives an exact sequence 
  $$1 \longrightarrow Z^\circ \cap G^\der \longrightarrow Z^\circ \times G^\der \stackrel{m}{\longrightarrow} G \longrightarrow \nH^1(F, \bZ^\circ \cap \bG^\der).$$
  By \cite[Thm. 7.1.8(iii)]{nsw:coh}, the cohomology group $\nH^1(F, \bZ^\circ \cap \bG^\der)$ is finite, and therefore $Z^\circ G^\der$ is of finite index in $G$.  Thus, the $G$-representation $\Ind_{Z^\circ G^\der}^G(\pi')$ is a smooth, non-admissible $G$-representation of finite length.  Further, at least one Jordan--H\"older factor of $\Ind_{Z^\circ G^\der}^G(\pi')$ must be non-admissible.  Defining $\pi$ to be this subquotient gives a smooth, irreducible, non-admissible $G$-representation with trivial action of $Z^\circ$.

  \vspace{5pt}

  \textit{Step 2.}  We suppose in the remainder of the proof that $\bG^\der$ contains a factor which is not of type $\nA_1$.  We shall enlarge the closed subgroup $Z^\circ G^\der$ of $G$ into an open subgroup of finite index.

  We enumerate $\bG^\der$ as in \eqref{Gderfactor} so that $\bG_1$ is not of type $\nA_1$.  Let $\alpha$ denote a short simple root which is extremal in the Dynkin diagram of $\bG_1$ (as in the proof of Lemma \ref{lem1}), and let $\bT_\alpha := \ker(\alpha)^\circ$ denote the connected kernel of $\alpha:\bT \longrightarrow \bG_m$, a subtorus of $\bT$.  Consider the short exact sequence 
  \begin{equation}
    \label{eqn:coradical}
    1 \longrightarrow \bG^\der \longrightarrow\bG \longrightarrow \bS \longrightarrow 1,
  \end{equation}
  where $\bS := \bG/\bG^\der \cong \bT/\bT^\der$ is a split torus (here $\bT^\der := \bT \cap \bG^\der$).  The composition $\bT_\alpha \longhookrightarrow \bT \longtwoheadrightarrow \bS$ induces morphisms of cocharacter lattices $X_*(\bT_\alpha) \longhookrightarrow X_*(\bT) \longtwoheadrightarrow X_*(\bS)$, and we claim that the composition $X_*(\bT_\alpha) \longrightarrow X_*(\bS)$ is surjective.  Since $\bT^\der$ is a maximal torus of $\bG^\der$ (and therefore connected), the kernel of the projection $X_*(\bT) \longtwoheadrightarrow X_*(\bS)$ is exactly $X_*(\bT^\der)$.  Further, we note that 
  $$X_*(\bT^\der) = \bigoplus_{\gamma \in \Delta} \bbZ \gamma^\vee \qquad\textnormal{and}\qquad X_*(\bT_\alpha) = \{\lambda \in X_*(\bT): \langle \alpha, \lambda\rangle = 0\}.$$
  Consequently, letting $\beta \in \Delta$ denote the unique neighbor of $\alpha$ and $\lambda \in X_*(\bT)$, we obtain $\lambda = (\lambda + \langle \alpha, \lambda\rangle\beta^\vee) - \langle \alpha, \lambda\rangle\beta^\vee \in X_*(\bT_\alpha) + X_*(\bT^\der)$.  Therefore, the image of $\lambda$ in $X_*(\bS)$ is the same as the image of $\lambda + \langle \alpha, \lambda\rangle \beta^\vee$, from which we conclude that $X_*(\bT_\alpha) \longrightarrow X_*(\bS)$ is surjective.

  Let $T_{\alpha,1} := \ker(\bT_\alpha(\cO_F) \longtwoheadrightarrow \bT_\alpha(k_F))$.  We claim that $T_{\alpha,1}Z^\circ G^\der$ is of finite index in $G$.  By \cite[Thm. 25.61]{milne:alggrp}, taking $F$-rational points of \eqref{eqn:coradical} gives an exact sequence of topological groups
  $$1 \longrightarrow G^\der \longrightarrow G \longrightarrow S \longrightarrow 1.$$
  It therefore suffices to show the image of $T_{\alpha,1}Z^\circ$ in $S$ is of finite index.  We note first that the surjection $X_*(\bT_\alpha) \longtwoheadrightarrow X_*(\bS)$ induces a surjection 
  $$X_*(\bT_\alpha) \otimes_{\bbZ}(1 + \varpi\cO_F) \longtwoheadrightarrow X_*(\bS)\otimes_{\bbZ}(1 + \varpi\cO_F).$$
  Using the isomorphism $X_*(\bT_\alpha) \otimes_{\bbZ}F^\times \stackrel{\sim}{\longrightarrow} T_\alpha$ (and likewise for $\bS$), the above surjection implies that $T_{\alpha,1}$ maps surjectively onto $S_1$, the pro-$p$ part of $\bS(\cO_F)$.  Thus, we are reduced to showing $Z^\circ$ has image of finite index in $S/S_1$, or equivalently in $S/\bS(\cO_F) \cong X_*(\bS) \otimes_{\bbZ} \varpi^{\bbZ}$.  To see this, note that the composition $\bZ^\circ \longhookrightarrow \bG \longtwoheadrightarrow \bS$ is an isogeny, and therefore the induced map $X_*(\bZ^\circ) \longrightarrow X_*(\bS)$ is injective with image of finite index.  Thus, $Z^{\circ}/\bZ^\circ(\cO_F)$ has image of finite index in $S/\bS(\cO_F)$, and the claim follows.

  \vspace{5pt}

  \textit{Step 3.} We construct a representation $\pi'$ of $G^\der$ having the desired properties.  Define $\pi'$ by
  $$\pi' := \pi_{G_1} \boxtimes 1_{G_2} \boxtimes \cdots \boxtimes 1_{G_n},$$
  where $\pi_{G_1}$ denotes an irreducible, non-admissible representation of $G_1$ with trivial action of the finite group $Z_{G_1}$ constructed in Lemma \ref{lem1}.  The representation $\pi'$ is smooth, irreducible, and non-admissible.

  \vspace{5pt}

  \textit{Step 4.}  We extend the representation $\pi'$ to a representation of $T_{\alpha,1}G^\der$, and verify the extension has the desired properties.  We endow $\pi'$ with an action of $T_{\alpha,1}$ as follows: given $t \in T_{\alpha,1}$ and $f \boxtimes 1 \boxtimes \cdots \boxtimes 1 \in \pi'$ (with $f \in \pi_{G_1} = \Ind_{P_\alpha^- \cap G_1}^{G_1}(\rho'\boxtimes \chi')$, as in Lemma \ref{lem1}), we define
  $$t\cdot (f \boxtimes 1 \boxtimes \cdots \boxtimes 1) := f^t \boxtimes 1 \boxtimes \cdots \boxtimes 1$$
  where $f^t(g_1) = f(t^{-1}g_1t)$ for $g_1 \in G_1$.  When $t \in T_{\alpha,1} \cap G^\der$, we have $f^t(g_1) = f(g_1t)$; this follows from the fact that the $G_1$-component of $t$ lies in the pro-$p$ part of the center of $M_\alpha \cap G_1$, and therefore acts trivially on $\rho'\boxtimes\chi'$.  One then checks that the action of $T_{\alpha,1}$ correctly intertwines with the given action of $G^\der$, and we therefore obtain an action of $T_{\alpha,1}G^\der$ on $\pi'$.

  To see that the above action is smooth, fix $f \in \pi_{G_1}$.  By the Iwasawa decomposition, we may write any element of $G_1$ as $g_1 = m_1 n_1^- k_1$, where $m_1 \in M_\alpha \cap G_1$, $n_1^- \in N_\alpha^- \cap G_1$ and $k_1 \in \bG_1(\cO_F)$.  For $t \in T_{\alpha,1}$, we have
  $$f^t(g_1)  =  f(t^{-1}g_1t)  =  f(t^{-1}m_1 n_1^-tt^{-1}k_1t)  =  \left((\rho'\boxtimes\chi')(t^{-1}m_1 t)\right)\cdot (f(k_1k_1^{-1}t^{-1}k_1t)).$$
  Since $t \in T_{\alpha,1}$, it centralizes $M_\alpha \cap G_1$, and therefore $t^{-1}m_1t = m_1$.  Furthermore, if $t$ lies in the $\ell^{\textnormal{th}}$ congruence subgroup of $T_{\alpha}$, \cite[Prop. 13.2.5(4)]{kalethaprasad} implies that $k_1^{-1}t^{-1}k_1t$ lies in the $\ell^{\textnormal{th}}$ congruence subgroup of $\bG_1(\cO_F)$.  In particular, since the stabilizer of $f$ is open, we have $k_1^{-1}t^{-1}k_1t \in \textnormal{Stab}_{G_1}(f)$ for $t$ lying in a sufficiently small open subgroup of $T_{\alpha,1}$.  In this case, the above equalities become
  $$f^t(g_1) = \left((\rho'\boxtimes\chi')(m_1)\right)\cdot (f(k_1)) = f(m_1n_1^-k_1) = f(g_1),$$
  which shows that the $T_{\alpha,1}$-action is smooth.

  The $T_{\alpha,1}G^\der$ representation $\pi'$ is also irreducible, since the restriction to $G^\der$ is.  Finally, we verify that $\pi'$ is non-admissible as a representation of $T_{\alpha,1}G^\der$.  Suppose $f \boxtimes 1 \boxtimes \cdots \boxtimes 1 \in \pi'^{\bG^\der(\cO_F)_1}$, where $\bG^\der(\cO_F)_1$ denotes the first congruence subgroup of $\bG^\der(\cO_F)$.  By the argument above, the element $k_1^{-1}t^{-1}k_1t$ lies in $\bG^\der(\cO_F)_1$ for any $t \in T_{\alpha,1}$ and any $k_1 \in \bG_1(\cO_F)$.  Thus, we obtain $t\cdot (f \boxtimes 1 \boxtimes \cdots \boxtimes 1) = f \boxtimes 1 \boxtimes \cdots \boxtimes 1$, which shows $\pi'^{\bG^\der(\cO_F)_1} \subset \pi'^{T_{\alpha,1}\bG^\der(\cO_F)_1}$.  We therefore conclude that $\pi'$ is non-admissible as a $T_{\alpha,1}G^\der$-representation.

  \vspace{5pt}

  \textit{Step 5.}  We extend the representation $\pi'$ to a representation of $T_{\alpha,1}Z^\circ G^\der$, and verify the extension has the desired properties.  We do this by decreeing that $Z^\circ$ acts trivially on $\pi'$.  To see that this action is well-defined, we must verify that any element of $T_{\alpha,1}G^\der$ which also lies in $Z^\circ$ acts trivially (that is, that the actions agree on $Z^\circ \cap T_{\alpha,1}G^\der$).  Let $z \in Z^\circ \cap T_{\alpha,1}G^\der$, and write $z = z'z_1$, where $z_1 \in Z^\circ_1$ and $z'$ lies in (the image of) $X_*(\bZ^\circ) \otimes_{\bbZ}(\varpi^{\bbZ} \times k_F^\times) \longhookrightarrow X_*(\bZ^\circ) \otimes_{\bbZ} F^\times \stackrel{\sim}{\longrightarrow} Z^\circ$.  We claim that the image of $z'$ in $S$ is trivial.  Indeed, since $z_1 \in Z^\circ_1 \subset T_{\alpha,1}$, we have $z' = z_1^{-1}z \in T_{\alpha,1}G^\der$, and therefore the image of $z'$ in $S$ lies in $S_1$ (recall that $T_{\alpha,1}$ surjects onto $S_1$).  However, since $z' \in X_*(\bZ^\circ) \otimes_{\bbZ}(\varpi^{\bbZ} \times k_F^\times)$, its image in $S$ will lie in $X_*(\bS) \otimes_{\bbZ}(\varpi^{\bbZ} \times k_F^\times)$, and therefore will have trivial intersection with $S_1$.  We conclude that $z' \in Z^\circ \cap G^\der$, and therefore $Z^\circ \cap T_{\alpha,1}G^\der = (Z^\circ \cap G^\der)Z_1^\circ$.  Now, the group $Z^\circ \cap G^\der$ is finite and contained in the center $Z_{G^\der} \cong Z_{G_1} \times \cdots \times Z_{G_n}$.  Therefore, by our choice of $\pi_{G_1}$ we see that $Z^\circ \cap G^\der$ acts trivially on $\pi'$.  Likewise, $Z^\circ_1$ acts trivially on $\pi'$ by definition of the $T_{\alpha,1}$-action.  We conclude that $\pi'$ extends to a representation of $T_{\alpha,1}Z^\circ G^\der$, and it is readily checked that it is smooth, irreducible, and non-admissible.

  \vspace{5pt}

  \textit{Step 6.}  We construct the desired representation $\pi$ of $G$.  As in Step 1, we note that $\Ind_{T_{\alpha,1}Z^\circ G^\der}^G(\pi')$ is a smooth, non-admissible $G$-representation of finite length (since $T_{\alpha,1}Z^\circ G^\der$ is of finite index in $G$).  Further, at least one Jordan--H\"older factor of $\Ind_{T_{\alpha,1}Z^\circ G^\der}^G(\pi')$ must be non-admissible.  Defining $\pi$ to be this subquotient gives a smooth, irreducible, non-admissible $G$-representation with trivial action of $Z^\circ$.  
\end{proof}

\begin{rmk}
  When $\textnormal{char}(F) = 0$, the above proof can be substantially shortened: in this case, the group scheme $\bZ^\circ \cap \bG^\der$ is always smooth over $F$, and the argument of Step 1 above implies the group $Z^\circ G^\der$ is always of finite index in $G$.  Thus, there is no need to use the subgroup $T_{\alpha,1}$ (as in the subsequent steps).  On the other hand, when $\textnormal{char}(F) = p$ and $G = \nG\nL_p(F)$, we have $G/Z^\circ G^\der = \nG\nL_p(F)/F^\times \nS\nL_p(F) \cong F^\times/(F^\times)^p$, which is of infinite order, and the simpler strategy no longer works. 
\end{rmk}

\begin{thm}
  \label{mainthm}
  Suppose $\bG$ is an arbitrary split connected reductive group over $F$.  Then $G$ possesses a smooth, irreducible, non-admissible representation $\pi$ defined over $\overline{k_F}$.
\end{thm}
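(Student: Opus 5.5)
Since $\bG \neq \bT$ is a standing assumption, the plan is to reduce to Lemma \ref{lem2} by passing to a \emph{z-extension}. Choose a central extension
$$1 \longrightarrow \bZ' \longrightarrow \widetilde{\bG} \longrightarrow \bG \longrightarrow 1,$$
where $\widetilde{\bG}$ is split connected reductive with simply connected derived subgroup and $\bZ'$ is a split central torus. Concretely, take the simply connected cover $\bG^{\der}_{sc} \to \bG^\der$, with finite central kernel $\mu$. Let $\widetilde{\bG} := (\bG^{\der}_{sc} \times \bT)/\mu$, where $\mu$ is embedded anti-diagonally via $\mu \hookrightarrow \bT^{\der}_{sc}$ and its image in $\bT$. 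The derived group of $\widetilde{\bG}$ is $\bG^\der_{sc}$, and the kernel of the natural map $\widetilde{\bG} \to \bG$ is a split torus isomorphic to the image of $\bT^\der_{sc}$ in $\widetilde{\bG}$, which is central. Everything stays split, because $\bT^\der_{sc}$ and $\bT$ are split.

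Next, apply Lemma \ref{lem2} to $\widetilde{\bG}$ to get a smooth, irreducible, non-admissible $\widetilde{G}$-representation $\widetilde{\pi}$ on which $\widetilde{Z}^\circ$ acts trivially. Since $\bZ' \subset \widetilde{\bZ}^\circ$, the group $Z' = \bZ'(F)$ acts trivially on $\widetilde{\pi}$.

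Because $\bZ'$ is a split torus, Hilbert 90 gives $\nH^1(F,\bZ') = 0$. Hence $\widetilde{G} \to G$ is surjective with kernel $Z'$, and it is open (by \cite[Thm. 25.61]{milne:alggrp}, as in Step 2 of Lemma \ref{lem2}). So $\widetilde{\pi}$ descends to a representation $\pi$ of $G = \widetilde{G}/Z'$. This $\pi$ is smooth, since images of open subgroups are open, and irreducible, since it has the same invariant subspaces.

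The remaining point is non-admissibility. The image of a compact open subgroup $\widetilde{K} \subset \widetilde{G}$ is compact open in $G$ with the same invariants, so admissibility of $\pi$ would give $\dim \widetilde{\pi}^{\widetilde{K}} < \infty$ for a basis of such $\widetilde{K}$. That would make $\widetilde{\pi}$ admissible, a contradiction.

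The main obstacle is checking that the z-extension is genuinely split reductive with central torus kernel, and that $\widetilde{G} \to G$ is a topological quotient map in characteristic $p$. Here $\mu$ may fail to be smooth (for instance $\mu_p$ when $\bG = \mathrm{PGL}_p$). To handle this, I would argue on root data instead: construct $\widetilde{\bG}$ from the root datum with $X^*(\widetilde{\bT}) = X^*(\bT) \oplus X^*(\bT^\der_{sc})$ glued suitably, and use that $\widetilde{\bT} \to \bT$ has split torus kernel. Then the Iwasawa/Bruhat decomposition, or Milne's theorem applied to smooth surjections with smooth kernel $\bZ'$, gives surjectivity and openness on $F$-points.
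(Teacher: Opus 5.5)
Your overall route is the paper's: pass to a $z$-extension $\widetilde{\bG}\to\bG$ with split central torus kernel, apply Lemma \ref{lem2} to $\widetilde{\bG}$, use Hilbert 90 to identify $G$ with the quotient of $\widetilde{G}$ by the $F$-points of the kernel, and descend. Your checks that smoothness, irreducibility and non-admissibility survive the descent are correct, and more explicit than the paper's.

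The genuine error is the explicit $z$-extension in your first paragraph. Since $\mu=\ker(\bG^{\der}_{\textnormal{sc}}\to\bG^{\der})$, its image in $\bT$ is trivial. So your anti-diagonal copy of $\mu$ is just $\mu\times 1$, and $(\bG^{\der}_{\textnormal{sc}}\times\bT)/\mu\cong\bG^{\der}\times\bT$. Its derived group is $\bG^{\der}$, which is not simply connected in general (e.g.\ $\bG=\textnormal{PGL}_2$). Moreover, on a direct product the map $(g,t)\mapsto\bar g t$ (with $\bar g$ the image of $g$ in $\bG^{\der}$) is not a homomorphism, because $\bT$ does not centralize $\bG^{\der}$. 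Also, the image of $\bT^{\der}_{\textnormal{sc}}$ is a maximal torus of the derived group, not a central subgroup. So both claims, ``the derived group is $\bG^{\der}_{\textnormal{sc}}$'' and ``the kernel is central'', fail as written.

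The repair is to make the product semidirect and take no quotient. Set $\widetilde{\bG}:=\bG^{\der}_{\textnormal{sc}}\rtimes\bT$, where $\bT$ acts on $\bG^{\der}_{\textnormal{sc}}$ through $\bT\to\bG^{\textnormal{ad}}$, i.e.\ by the lift of its conjugation action on $\bG^{\der}$. Then:
\begin{enumerate}[$\diamond$]
\item $(g,t)\mapsto\bar g t$ is a surjective homomorphism onto $\bG$, with kernel $\{(x,\bar x^{-1}) : x\in\bT^{\der}_{\textnormal{sc}}\}\cong\bT^{\der}_{\textnormal{sc}}$.
\item This kernel is central precisely because $\bar x^{-1}$ acts on $\bG^{\der}_{\textnormal{sc}}$ as conjugation by $x^{-1}$.
\item The derived group is $\bG^{\der}_{\textnormal{sc}}$, and the maximal torus $\bT^{\der}_{\textnormal{sc}}\times\bT$ is split.
\end{enumerate}
On root data this is exactly the ``glued'' datum you were reaching for: $X^*(\widetilde{\bT})=X^*(\bT^{\der}_{\textnormal{sc}})\oplus X^*(\bT)$, with roots $(\alpha,\alpha)$ and coroots $(\alpha^\vee,0)$. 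Since nothing is divided by $\mu$, its possible non-smoothness in characteristic $p$ never enters.

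Alternatively, do what the paper does and simply invoke the standard existence of $z$-extensions. With either repair your argument is complete and coincides with the paper's.
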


\begin{proof}
  Let 
  $$1 \longrightarrow \widetilde{\bZ} \longrightarrow \widetilde{\bG} \longrightarrow \bG \longrightarrow 1$$
  denote a $z$-extension of $\bG$.  Thus, $\widetilde{\bG}$ is a split connected reductive group with simply connected derived subgroup, and $\widetilde{\bZ}$ is a split torus contained in the connected center of $\widetilde{\bG}$.  In particular, by Hilbert's Theorem 90, taking $F$-rational points of the short exact sequence above is exact, and we obtain a short exact sequence of topological groups
  $$1 \longrightarrow \widetilde{Z} \longrightarrow \widetilde{G} \longrightarrow G \longrightarrow 1.$$
  By Lemma \ref{lem2}, the group $\widetilde{G}$ admits a smooth, irreducible, non-admissible representation $\pi$ with a trivial action of the $F$-points of the connected center of $\widetilde{\bG}$.  In particular, the group $\widetilde{Z}$ acts trivially on $\pi$, and therefore $\pi$ descends to a representation of $\widetilde{G}/\widetilde{Z} \cong G$ having the same properties.
\end{proof}

\begin{cor}
  \label{maincor}
  Let $k$ be a field of characteristic $p$, and suppose $\bG$ is an arbitrary split connected reductive group over $F$.  Then $G$ possesses a smooth, irreducible, non-admissible representation $\pi$ defined over $k$.
\end{cor}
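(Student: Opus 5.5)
The plan is to descend the representation of Theorem \ref{mainthm} from $\overline{k_F}$ to $\mathbb{F}_p$, and then extend scalars to an arbitrary field $k$ of characteristic $p$. The key point is that every construction in Section 3 can already be carried out over a finite field. The representation $\rho$ of $\nG\nL_2(F)$ has a model $\rho_0$ over $k_F$, which is smooth, absolutely irreducible, non-admissible, and satisfies $\End_{k_F[\nG\nL_2(F)]}(\rho_0)=k_F$. The characters $\omega$ and $\chi'$ take values in $k_F^\times$. Parabolic induction commutes with extension of scalars. So the representation $\pi$ of Lemma \ref{lem1}, Step 3, is defined over $k_F$, and its irreducibility argument works verbatim over $\overline{k_F}$, so that model is absolutely irreducible.

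The subsequent steps (restriction to $\nS\nL_2(F)$, inflation and extension of actions, induction from a finite-index open subgroup, taking a Jordan--H\"older factor, descent along the $z$-extension) each preserve definability over some finite extension $k'$ of $k_F$. I would rerun these steps in that form. Since the restriction in Lemma \ref{lem1}, Step 1, and the induction in Lemma \ref{lem2}, Step 6, involve finite-index subgroups, the irreducible constituents over $\overline{k_F}$ are already defined over a finite extension $k'$. Concretely, each constituent is cut out by finitely many intertwining operators, and the relevant finite-dimensional endomorphism algebras are defined over a finite field. The outcome is a smooth representation $\pi_0$ of $G$ over a finite field $k'$ with $\pi_0\otimes_{k'}\overline{k_F}\cong \pi$ irreducible and non-admissible.

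Next, regard $\pi_0$ as a representation over $\mathbb{F}_p$ by restriction of scalars; call it $\sigma$. Then $\sigma$ is smooth and non-admissible, since invariants under any compact open subgroup are infinite-dimensional over $k'$ and hence over $\mathbb{F}_p$. It is also irreducible, because any nonzero $G$-stable $\mathbb{F}_p$-subspace generates $\pi_0$ as a $G$-representation. Its endomorphism algebra $D=\End_{\mathbb{F}_p[G]}(\sigma)$ is a division algebra containing $k'$. Now let $k$ be arbitrary of characteristic $p$ and form $\sigma\otimes_{\mathbb{F}_p} k$. This may fail to be irreducible, so I instead proceed as follows. Choose an embedding of the finite field $k'$ into an algebraic closure $\overline{k}$, and note that $k\cdot k'$ is a finite extension $k''$ of $k$. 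Then $\pi_0\otimes_{k'}k''$ is absolutely irreducible over $k''$, because $\pi_0$ is absolutely irreducible. Viewing $\pi_0\otimes_{k'}k''$ as a $k$-representation by restriction of scalars along the finite extension $k''/k$ gives a smooth, non-admissible $k$-representation. Any nonzero subrepresentation of it is stable under $k''$-scalars, hence a $k''$-subrepresentation, hence everything. So it is irreducible.

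The main obstacle is the bookkeeping in the second step. One must confirm that every Jordan--H\"older factor and Clifford-theory summand chosen over $\overline{k_F}$ descends to a finite field with absolute irreducibility intact, and that non-admissibility is inherited. The cleanest route is probably to redo Lemmas \ref{lem1} and \ref{lem2} over $\overline{\mathbb{F}_p}$ and observe that any smooth, irreducible representation of $G$ over $\overline{\mathbb{F}_p}$ with a vector fixed by an open subgroup is defined over a finite field. That observation is delicate for non-admissible representations, so I would instead track fields of definition explicitly through each finite-index step.
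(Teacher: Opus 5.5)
Your overall plan (carry an absolutely irreducible model over a finite field through the constructions, then pass to $k$) is the paper's. However, your final step contains a genuine error, and the same error appears in your claim that $\sigma$ is irreducible. You assert that every nonzero $k[G]$-subrepresentation $W$ of $\mathrm{Res}_{k''/k}(V)$, where $V:=\pi_0\otimes_{k'}k''$, is stable under $k''$-scalars. Nothing forces this. The field $k''$ acts through $G$-endomorphisms, and a $G$-stable subspace need not be stable under the commutant; irreducibility of $V$ only gives $k''W=V$.

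For example, if $V$ is the trivial character and $d:=[k'':k]>1$, every $k$-line in $k''$ is a subrepresentation. More generally, whenever $V\cong V_1\otimes_k k''$ descends to $k$, one gets $\mathrm{Res}_{k''/k}(V)\cong V_1^{\oplus d}$. The same objection applies to $\sigma=\mathrm{Res}_{k'/\mathbb{F}_p}(\pi_0)$: a $G$-stable $\mathbb{F}_p$-subspace spans $\pi_0$ over $k'$ but need not equal it. So the $k$-representation you produce is smooth and non-admissible, but you have not shown it is irreducible.

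The repair is what the paper extracts from \cite[Lem. II.5, Lem. III.1(ii)]{henniartvigneras:rationality}. Since $k''=kk'$ is generated by roots of unity, $k''/k$ is separable. Hence $\mathrm{Res}_{k''/k}(V)\otimes_k\overline{k}\cong\bigoplus_{\tau}V\otimes_{k'',\tau}\overline{k}$, the sum running over the $d$ $k$-embeddings $\tau\colon k''\to\overline{k}$, and each summand is irreducible and non-admissible. It follows that $\mathrm{Res}_{k''/k}(V)$ has length at most $d$, so you may take an irreducible subrepresentation $\Pi$. Then $\Pi\otimes_k\overline{k}$ contains one of these summands. Since $(\Pi\otimes_k\overline{k})^K=\Pi^K\otimes_k\overline{k}$ for compact open $K$, $\Pi$ is non-admissible.

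This argument needs $\pi_0$ to stay irreducible after \emph{every} extension of scalars. In particular it must stay irreducible over $\overline{k}$, which may be transcendental over $\overline{k_F}$. Your Step 3 justification only gives irreducibility over $\overline{k_F}$. For non-admissible representations the stronger statement is not automatic, because Schur's lemma over the countable field $\overline{k_F}$ is not available in general. The paper secures it by computing $\End_{k_F[G]}(\pi_0)=k_F$ via the right adjoint of parabolic induction. It also keeps the endomorphism algebras finite-dimensional through the finite-index (Clifford/Mackey) steps, which is the substance of the bookkeeping you left vague.
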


See also \cite[Rmk. 3]{ghatesheth}.  

\begin{proof}
  We first claim that the representation $\pi$ constructed in Theorem \ref{mainthm} has an absolutely irreducible model $\pi_0$ over some finite extension $k_F''$ of $k_F$.  Indeed:
  \begin{enumerate}[$\diamond$]
    \item In Step 1 of Lemma \ref{lem1}, the representation $\rho|_{Z_{\nG\nL_2}\nS\nL_2(F)}$ has a model $\rho_0|_{Z_{\nG\nL_2}\nS\nL_2(F)}$ over $k_F$, which is a direct sum of at most 4 irreducible $Z_{\nG\nL_2}\nS\nL_2(F)$-representations over $k_F$. Therefore, $\textnormal{End}_{k_F[Z_{\nG\nL_2}\nS\nL_2(F)]}(\rho_0|_{Z_{\nG\nL_2}\nS\nL_2(F)})$ is a finite-dimensional, semisimple $k_F$-algebra, and hence by the Artin--Wedderburn theorem is isomorphic to a finite product of matrix rings over finite extensions of $k_F$.  Let $k_F'$ denote the compositum of all these finite extensions; then $k_F'$ is finite over $k_F$ and  $(\rho_0 \otimes_{k_F} k_F')|_{Z_{\nG\nL_2}\nS\nL_2(F)}$ is a $k_F'$-model of $\rho|_{Z_{\nG\nL_2}\nS\nL_2(F)}$, all of whose irreducible summands are absolutely irreducible (its endomorphism ring is a finite product of matrix rings over $k_F'$).  It therefore contains a $k_F'$-model $\pi_0$ of the representation $\pi$ constructed in Step 1 of Lemma \ref{lem1}.  
    \item In Step 3 of Lemma \ref{lem1}, define $\pi_0 := \Ind_{P_\alpha^-}^G((\rho_0\otimes_{k_F}\omega_0^{-m}\circ\det_{\nG\nL_2})\boxtimes_{k_F}\chi_0')$, where $\omega_0$ and $\chi_0'$ denote the $k_F$-valued versions of the characters $\omega$ and $\chi'$.  By \cite[Prop. III.12(i)]{henniartvigneras:rationality}, $\pi_0$ is a $k_F'$-model of $\pi$, and is therefore irreducible.  Moreover, using the right adjoint of parabolic induction (see \cite[Thm. 5.3(2)]{vigneras:rightadjoint}), we have 
    $$\End_{k_F[G]}(\pi_0) \cong \End_{k_F[M_\alpha]}((\rho_0 \otimes_{k_F} \omega_0^{-m}\circ\sideset{}{_{\nG\nL_2}}{\det})\boxtimes_{k_F} \chi_0') \cong \End_{k_F[\nG\nL_2(F)]}(\rho_0) = k_F,$$ 
    so that $\pi_0$ is indeed absolutely irreducible.
    \item By the previous points, the representation $\pi'$ constructed in Steps 1 and 3 through 6 of Lemma \ref{lem2} has an absolutely irreducible model $\pi'_0$ over some finite extension $k_F'$ of $k_F$.  Let $H$ denote either $Z^\circ G^\der$ (in the setting of Step 1) or $T_{\alpha,1}Z^\circ G^\der$ (in the setting of Steps 3 to 6), so that $H$ is a finite-index, normal subgroup of $G$.  Consider the representation $\Ind_H^G(\pi_0')$ over $k_F'$, and let $\tau_0$ denote an irreducible subquotient.  Then $\tau_0|_{H}$ is a subquotient of $\Ind_H^G(\pi_0')|_H$, which by the Mackey formula is isomorphic to $\bigoplus_{g \in G/H}\pi_0'^g$, a direct sum of absolutely irreducible representations over $k_F'$.  Thus, $\tau_0|_H$ is semisimple, and $\End_{k_F'[H]}(\tau_0|_H)$ is a subring of $\End_{k_F'[H]}(\Ind_H^G(\pi_0')|_H)$, which is finite-dimensional over $k_F'$.  We conclude that $\End_{k_F'[G]}(\tau_0)$, being a subring of $\End_{k_F'[H]}(\tau_0|_{H})$, is a finite-dimensional division algebra over $k_F'$, that is, a finite extension of $k_F'$.  Let $k_F''$ denote the compositum of all such finite extensions as $\tau_0$ ranges over the (finitely many) irreducible subquotients of $\Ind_H^G(\pi_0')$.  Then $k_F''$ is finite over $k_F'$ and $\tau_0\otimes_{k_F'}k_F''$ is a direct sum of absolutely irreducible representations (it is semisimple by \cite[\S 12, No. 5, Cor.]{bourbaki:algebra-ch8}, and its endomorphism ring is $\End_{k_F'[G]}(\tau_0)\otimes_{k_F'}k_F''$, a finite product of copies of the field $k_F''$).  Since this holds for every subquotient of $\Ind_H^G(\pi_0')$, we conclude that every irreducible subquotient of $\Ind_H^G(\pi_0'\otimes_{k_F'}k_F'')$ is absolutely irreducible.  In particular, $\Ind_H^G(\pi_0'\otimes_{k_F'}k_F'')$ contains a $k_F''$-model $\pi_0$ of the representation $\pi$ constructed in Step 1 or Step 6 of Lemma \ref{lem2}.   
    \item In Theorem \ref{mainthm}, any absolutely irreducible $k_F''$-model $\pi_0$ of the $\widetilde{G}$-representation $\pi$ descends to the group $G$.
  \end{enumerate}

  Recall that $G$ is the group of $F$-rational points of an arbitrary split connected reductive group, and $k$ is an arbitrary field of characteristic $p$.  Fix an absolutely irreducible $k_F''$-model $\pi_0$ of the $G$-representation $\pi$ of Theorem \ref{mainthm}.  Let $\overline{k}$ denote an algebraic closure of $k$ and fix compatible inclusions among all coefficient fields appearing.  We have $$\pi\otimes_{\overline{k_F}}\overline{k} \cong (\pi_0 \otimes_{k_F''} \overline{k_F}) \otimes_{\overline{k_F}}\overline{k} \cong (\pi_0 \otimes_{k_F''} k_F''k) \otimes_{k_F'' k}\overline{k},$$ 
  and therefore $\pi_0 \otimes_{k_F''} k_F''k$ defines a descent to $k_F''k$ of the absolutely irreducible $G$-representation $\pi\otimes_{\overline{k_F}}\overline{k}$.  Since $k_F''$ is a finite extension of $\bbF_p$, $k_F''k$ is a finite extension of $k$, and \cite[Lem. II.5]{henniartvigneras:rationality} implies that there exists a smooth, irreducible $G$-representation $\Pi$ over $k$ such that $\pi\otimes_{\overline{k_F}}\overline{k}$ is isomorphic to a subrepresentation of $\Pi\otimes_{k}\overline{k}$.  Taking invariants by an open pro-$p$ subgroup (and applying \cite[Lem. III.1(ii)]{henniartvigneras:rationality}) shows that $\Pi$ is non-admissible.  
\end{proof}

\section{A non-equivalence result}

We now deduce the main consequence of the above results to the study of pro-$p$-Iwahori--Hecke algebras.  Recall that $F$ is a nonarchimedean local field with residue field $k_F$ which is a proper extension of $\bbF_p$ with $p > 3$, and $k$ is a coefficient field of characteristic $p$.  

\begin{thm}
  \label{thm:hecke-alg-cor}
  Suppose $\bG$ is an arbitrary split connected reductive group over $F$, and let $I_1$ denote a choice of pro-$p$-Iwahori subgroup of $G$.  Then the functor of $I_1$-invariants $\pi \longmapsto \pi^{I_1}$ does \textbf{not} induce an equivalence of categories between $\mathfrak{Rep}^{I_1}_k(G)$ and $\mathfrak{Mod-}\cH_k(G,I_1)$.  
\end{thm}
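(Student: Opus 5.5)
We argue by contradiction. Suppose the functor $\pi \mapsto \pi^{I_1}$ were an equivalence $\mathfrak{Rep}^{I_1}_k(G) \simeq \mathfrak{Mod-}\cH_k(G,I_1)$. Its quasi-inverse is then necessarily the left adjoint $M \mapsto M\otimes_{\cH_k(G,I_1)}\cind_{I_1}^G(k)$ of \eqref{equiv}. We will use Corollary \ref{maincor} to produce an object of $\mathfrak{Rep}^{I_1}_k(G)$ whose image under $(-)^{I_1}$ cannot behave as an equivalence requires.

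Let $\pi$ be the smooth, irreducible, non-admissible $k$-representation given by Corollary \ref{maincor}. Since $\pi \neq 0$ and $I_1$ is pro-$p$, we have $\pi^{I_1}\neq 0$. Irreducibility then shows that $\pi$ is generated by $\pi^{I_1}$, so $\pi$ lies in $\mathfrak{Rep}^{I_1}_k(G)$. An equivalence preserves simple objects, so $M := \pi^{I_1}$ would be a simple right $\cH_k(G,I_1)$-module.

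The key input is that simple $\cH_k(G,I_1)$-modules are finite-dimensional over $k$. The algebra $\cH_k(G,I_1)$ is finitely generated as a module over its center $\mathcal{Z}$ (a commutative finitely generated $k$-algebra), by Vignéras' structure theory of the pro-$p$-Iwahori--Hecke algebra. For a simple module $M$, $\End(M)$ is a division algebra of countable dimension over $k$; here we may use that $\cH_k(G,I_1)$ has a countable basis indexed by $I_1\backslash G/I_1$. A Schur-lemma/Dixmier-type argument then shows that $\mathcal{Z}$ acts on $M$ through a character valued in an algebraic extension of $k$. More cleanly, $M$ is a simple module over $\cH_k(G,I_1)/\mathfrak{m}\cH_k(G,I_1)$ for some maximal ideal $\mathfrak{m}$ of $\mathcal{Z}$. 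That quotient is finite-dimensional over $\mathcal{Z}/\mathfrak{m}$, which is finite over $k$ by the Nullstellensatz. Hence $\dim_k M<\infty$.

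It follows that $\dim_k \pi^{I_1}<\infty$. By the standard fact (e.g.\ Hecke-module finiteness, or the remark that an open pro-$p$ subgroup $K\subset I_1$ is normalized by an open subgroup and $\pi^K$ is a finite $I_1/K$-module extension), finite-dimensionality of $\pi^{I_1}$ forces $\dim \pi^K<\infty$ for all open compact $K$, i.e.\ $\pi$ is admissible. This contradicts the choice of $\pi$.

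The main obstacle is this last implication: showing that $\pi^{I_1}$ finite-dimensional implies $\pi$ admissible in characteristic $p$. The plan is to take a normal open pro-$p$ subgroup $K\trianglelefteq I_1$ and consider $\pi^K$ as a smooth representation of the finite $p$-group $I_1/K$. Every nonzero subrepresentation of $\pi^K$ meets $(\pi^K)^{I_1/K}=\pi^{I_1}$ nontrivially, so the socle of $\pi^K$ is contained in $\pi^{I_1}$ and is finite-dimensional. We then need to pass from a finite-dimensional socle to finite-dimensionality of $\pi^K$ itself.

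The first step is to show that $\pi^K$ is finitely generated over $k[I_1/K]$. If that holds, then since $k[I_1/K]$ is finite-dimensional, $\pi^K$ is finite-dimensional and we are done.

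If proving finite generation directly proves awkward, we will instead argue at the level of the equivalence. Take any finite-dimensional simple submodule $M$ of $\pi^{I_1}$. The counit $M\otimes\cind_{I_1}^G(k)\to\pi$ is nonzero, hence surjective by irreducibility of $\pi$. We then compare $\pi$ with a representation known to be admissible. The cleanest route is to note that an equivalence sends $\pi$ to the simple module $M$, so $\pi\cong M\otimes\cind_{I_1}^G(k)$. For a finite-dimensional module $M$, this representation is a quotient of finitely many copies of $\cind_{I_1}^G(k)$ with a finite-dimensional parameter. Its admissibility then follows from the finiteness of $\cind_{I_1}^G(k)$ over the Hecke algebra, namely Vignéras' result that $\cind_{I_1}^G(k)^{K}$ is finitely generated over $\cH_k(G,I_1)$ for open $K\subseteq I_1$.
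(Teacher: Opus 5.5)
Your main line is the paper's proof. You assume an equivalence, note that the irreducible non-admissible $\pi$ of Corollary \ref{maincor} is a simple object of $\mathfrak{Rep}^{I_1}_k(G)$, conclude that $\pi^{I_1}$ would be a simple $\cH_k(G,I_1)$-module, and contradict finite-dimensionality of simple modules. For that last fact the paper simply cites the proof of Ollivier--Schneider's Lemma 6.9. Your ``cleaner'' argument is the right one: a simple $M$ is cyclic, hence finitely generated over the center $\mathcal{Z}$, so Nakayama gives a maximal ideal $\mathfrak{m}$ with $M\mathfrak{m}\neq M$, hence $M\mathfrak{m}=0$ and $\dim_k M\le \dim_k \cH_k(G,I_1)/\mathfrak{m}\cH_k(G,I_1)<\infty$. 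Drop the Dixmier/countable-dimension variant, since it needs $k$ uncountable, which fails for example for $k=\overline{\mathbb{F}}_p$.

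Where the proposal falls short is the step you flag yourself: that $\dim_k\pi^{I_1}<\infty$ forces admissibility. The paper uses this silently as a standard fact, but neither of your justifications proves it.
\begin{itemize}
\item ``Show $\pi^K$ is finitely generated over $k[I_1/K]$'' is just a restatement of $\dim_k\pi^K<\infty$, not a reduction.
\item The fallback does not work. In general $(M\otimes_{\cH}\cind_{I_1}^G(k))^K\neq M\otimes_{\cH}\cind_{I_1}^G(k)^K$, because $K$-invariants is only left exact and does not commute with $\otimes_{\cH}$. So finiteness of $\cind_{I_1}^G(k)^K$ over $\cH_k(G,I_1)$, even if granted, says nothing about $\pi^K$.
\item Being a quotient of finitely many copies of $\cind_{I_1}^G(k)$ cannot force admissibility either: the non-admissible $\pi$ itself is a quotient of a single copy.
\end{itemize}

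The missing argument is elementary. Take $K\trianglelefteq I_1$ open and choose a chain $K=K_0\subset K_1\subset\cdots\subset K_r=I_1$ with $[K_{i+1}:K_i]=p$; each $K_i$ is then normal in $K_{i+1}$. Let $\sigma$ generate $K_{i+1}/K_i$ and set $N:=\sigma-1$ acting on $\pi^{K_i}$.
\begin{itemize}
\item In characteristic $p$ we have $N^p=\sigma^p-1=0$ on $\pi^{K_i}$.
\item For each $j$, $N^j$ induces an injection $\ker N^{j+1}/\ker N^j\hookrightarrow\ker N=\pi^{K_{i+1}}$.
\end{itemize}
Hence $\dim\pi^{K_i}\le p\dim\pi^{K_{i+1}}$, and by induction $\dim\pi^K\le[I_1:K]\dim\pi^{I_1}$. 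Equivalently, $k[I_1/K]$ is self-injective with socle the trivial character, so the essential socle $\pi^{I_1}$ of $\pi^K$ yields an embedding $\pi^K\hookrightarrow k[I_1/K]^{\oplus\dim\pi^{I_1}}$. Every compact open subgroup of $G$ contains such a $K$, so $\pi$ is admissible. With this inserted, your proof is complete and coincides with the paper's.
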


\begin{proof}
  Suppose on the contrary that $\pi \longmapsto \pi^{I_1}$ yields the desired equivalence.  In particular, the functor of $I_1$-invariants maps simple objects in $\mathfrak{Rep}^{I_1}_k(G)$ to simple objects in $\mathfrak{Mod-}\cH_k(G,I_1)$.  Corollary \ref{maincor} implies that $\mathfrak{Rep}^{I_1}_k(G)$ contains an irreducible $G$-representation $\pi$ over $k$ which is non-admissible.  Thus, $\pi^{I_1}$ gives an infinite-dimensional simple $\cH_k(G,I_1)$-module.  However, the proof of \cite[Lem. 6.9]{ollivierschneider} implies that every simple $\cH_k(G,I_1)$-module is finite-dimensional, and we therefore arrive at a contradiction.  
\end{proof}

\bibliographystyle{amsalpha}
\bibliography{refs}

\end{document}